\documentclass[12pt,twoside]{amsart}
\usepackage {amssymb,latexsym,amsthm,amsmath}

\topmargin=1.2cm
\textheight = 8.3in
\textwidth = 5.8in
\setlength{\oddsidemargin}{.8cm}
\setlength{\evensidemargin}{.8cm}

\makeatletter
\def\imod#1{\allowbreak\mkern10mu({\operator@font mod}\,\,#1)}
\makeatother

\newtheorem{theorem}{Theorem}[section]
\newtheorem{lemma}[theorem]{Lemma}

\newtheorem{proposition}[theorem]{Proposition}

\newtheorem{definition}[theorem]{Definition}
\newtheorem*{theorem*}{Theorem}
\theoremstyle{definition}
\newtheorem{remark}[theorem]{Remark}

\newtheorem{examples}[theorem]{Examples}

\numberwithin{equation}{section}

\newcommand{\ignore}[1]{}

\newcommand{\mynote}[1]{}
\begin{document}
\setcounter{section}{0}
\title{Finiteness of $z$-classes in reductive groups}
\author{Shripad M. Garge.}
\address{Department of Mathematics, Indian  Institute  of  Technology  Bombay, Powai, Mumbai.  400 076.
INDIA.}
\email{shripad@math.iitb.ac.in}
\email{smgarge@gmail.com}
\author{Anupam Singh.}
\address{IISER Pune, Dr. Homi Bhabha Road, Pashan, Pune 411008, India.}
\email{anupamk18@gmail.com}
\subjclass[2010]{20G15, 11E72}
\keywords{$z$-classes, reductive groups, Galois cohomology}
\date{}

\begin{abstract}
Let $k$ be a perfect field such that for every $n$ there are only finitely many field extensions, up to isomorphism, of $k$ of degree $n$.
If $G$ is a reductive algebraic group defined over $k$, whose characteristic is very good for $G$, then we prove that $G(k)$ has only finitely many $z$-classes. 

For each perfect field $k$ which does not have the above finiteness property we show that there exist groups $G$ over $k$ such that $G(k)$ has infinitely many $z$-classes.
\end{abstract}

\maketitle

\section{Introduction}
Let $G$ be a group.
A $G$-set is a non-empty set $X$ admitting an action of the group $G$. 
Two $G$-sets $X$ and $Y$ are called $G$-isomorphic if there is a bijection $\phi: X \to Y$ which commutes with the $G$-actions on $X$ and $Y$.
If the $G$-sets $X$ and $Y$ are transitive, say $X = G x$ and $Y = G y$ for some $x \in X$ and $y \in Y$, then $X$ and $Y$ are $G$-isomorphic if and only if the stabilizers $G_x$ and $G_y$ of $x$ and $y$, respectively, in $G$ are conjugate as subgroups of $G$. 

One of the most important group actions is the conjugation action of a group $G$ on itself. 
It partitions the group $G$ into its conjugacy classes which are orbits under this action. 
However, from the point of view of $G$-action, it is more natural to partition the group $G$ into sets consisting of conjugacy classes which are $G$-isomorphic to each other.
This motivates the following definition:

\begin{definition}
Let $G$ be a group and let $x, y \in G$. 
We say that $x$ and $y$ are {\em $z$-equivalent} if the centralisers of $x$ and $y$ are conjugate in $G$. 
\end{definition}

It is clear that $z$-equivalence is an equivalence relation. 
The equivalence classes under $z$-equivalence are called {\em $z$-classes}.
Each $z$-class is union of certain conjugacy classes of $G$, more precisely, each $z$-class is union of the conjugacy classes in $G$ which are $G$-isomorphic to a given conjugacy class.
A group $G$ is abelian if and only if it has a single $z$-class consisting of all its elements.
In general, the center of a group $G$ is the $z$-class of the identity element.

The notion of a $z$-class was introduced to the authors by Ravi Kulkarni. 
He and his students have computed $z$-classes for some classical groups (for instance, see \cite{Go} and \cite{GK}).
These computations for the unitary groups are done by Bhunia and Singh (see \cite{BSi}) and for the compact real group of type $G_2$ by Singh \cite{Si}.

The number of $z$-classes of a finite group is finite. 
It is, however, interesting to note that there exist infinite groups which have finitely many $z$-classes.
For instance, the infinite dihedral group, $D_{\infty} = \langle r, s: s^2 = 1, srs=r^{-1}\rangle$, has three $z$-classes consisting, respectively, of reflections, the non-central rotations and the central rotations.
Indeed, the other infinite dihedral group, the uncountable one, the group $O_2 (\mathbb{R})$ of $2 \times 2$ orthogonal matrices with real entries, also has the same description of its $z$-classes.

The $z$-classes in $GL_n(\mathbb{C})$ are also known as orbit types. 
It follows from the theory of Jordan canonical forms that in $GL_n(\mathbb{C})$ there are infinitely many conjugacy classes but only finitely many $z$-classes. 
For instance, there are only three $z$-classes in $GL_2(\mathbb{C})$.
We prove below that, in general, the number of $z$-classes is finite for a reductive group over an algebraically closed field (see Theorem~\ref{Fz-kbar}). 

On the other hand, the number of $z$-classes in the group $GL_2(\mathbb{Q})$ is infinite. 
Indeed, non-isomorphic quadratic extensions of $\mathbb{Q}$ give rise to non-isomorphic maximal tori in $GL_2(\mathbb{Q})$ which further give rise to non-conjugate centralisers of regular semisimple elements in $GL_2(\mathbb{Q})$.
We give more details on this theme in the fifth section of this paper.

Thus, the arithmetic of the base field, $k$, seems to govern the (in)finiteness of the number of $z$-classes in a reductive group defined over $k$. 
The field of complex numbers has no non-trivial field extension, $\mathbb{R}$ has only one non-trivial field extension and the groups $GL_2(\mathbb{C})$ and $O_2(\mathbb{R})$ has finitely many $z$-classes whereas $\mathbb{Q}$ has infinitely many field extensions and the group $GL_2(\mathbb{Q})$ does have infinitely many $z$-classes. 
This phenomenon has been observed in the papers mentioned above as well. 
Note that each of those papers concentrates on a group of a fixed type.
Whenever the base field $k$ has only finitely many extensions of any given degree, up to isomorphism, then it is proved in the above papers that the corresponding groups have finitely many $z$-classes. 

Our aim in this paper is to prove this result in general. 
We consider a perfect field $k$ which has only finitely many extensions, up to isomorphism, of any given degree. 
Such fields are called {\em fields of type $(F)$} by Borel and Serre (\cite{BSe}). 
We prove that if $G$ is a reductive linear algebraic group over a field $k$ of type $(F)$ then $G(k)$ has finitely many $z$-classes. 
We give a general proof for all the cases, treating all the groups at one go, including the exceptional groups whose $z$-classes have not been studied from this point of view yet.
The proofs given in the works quoted above are computational (therefore they do not apply easily to the exceptional groups, especially of type $E_6, E_7$ and $E_8$) whereas our proof, we believe, is more conceptual.

In the next section, we discuss the $z$-classes in $G(\bar{k})$ where $G$ is a reductive group defined over an algebraically closed field $\bar{k}$.
We reproduce the proof, due mainly to Steinberg (\cite[Corollary 1, page 107]{St}), that such a $G(\bar{k})$ has finitely many $z$-classes.
The third section discusses some basic notions of Galois cohomology, our main tool, and introduce the notion of a field of type $(F)$.
The fourth section gives the proof of our main result.
The final section completes the paper with some observations regarding the non-finiteness (see Proposition~\ref{exampleA} and \ref{exampleB}) of $z$-classes in a reductive group defined over a field $k$ which is not of type $(F)$.

The main idea of our proof is as follows:
Let $X$ be the set of $z$-classes in $G(k)$ and let $Y$ be the set of $z$-classes in $G(\overline{k})$. 
We want to have a function $\theta: X \to Y$ which sends the $z$-class of an element in $G(k)$ to the $z$-class of the same element in $G(\overline{k})$. 
Assume, for the moment, that we have such a function. 
By the theorem of Steinberg and others, $Y$ is known to be a finite set and Galois cohomology allows us to prove that the fibre of each element of $Y$ is a finite subset of $X$. 
This would prove that $X$ is finite, provided that we have the function $\theta: X \to Y$.

Interestingly, that is not so.
This means that there exists an example of a group $G$ defined over a field $k$ and elements $g, h \in G(k)$ such that $g, h$ are $z$-equivalent in $G(k)$ but not in $G(\overline{k})$. 

\begin{examples}
Let $G$ be the standard Borel subgroup in $GL_2$, the group of $2 \times 2$ upper triangular matrices, over the field $\mathbb{F}_2$. 
The order of $G(\mathbb{F}_2)$ is two, hence it is abelian. 
Therefore, the identity element in $G(\mathbb{F}_2)$ is $z$-equivalent to the non-trivial unipotent element, say $g$, in $G(\mathbb{F}_2)$. 
But, as is well-known, the centralizer of the element $g$ in $G(\overline{\mathbb{F}}_2)$ is of dimension two, generated by the unipotent elements and the central elements of $G(\overline{\mathbb{F}}_2)$. 
Hence $g$ is not $z$-equivalent to the trivial element in $G(\overline{\mathbb{F}}_2)$. 

If we let $G$ to be the standard Borel subgroup in $SL_2$ over the field $\mathbb{F}_3$ then we again get an example of $g, h$ which are $z$-equivalent in $G(\mathbb{F}_3)$ but not in $G(\overline{\mathbb{F}}_3)$. 
\end{examples}

\begin{remark}{\rm 
We believe that there are very few examples of the above kind.
In particular, we believe that if $G$ is a reductive group defined over a field $k$ then $G(k)$ will never give such an example. 
More precisely, if $G$ is a reductive group defined over a field $k$ and $g, h \in G(k)$ are $z$-equivalent in $G(k)$ then we believe that $g, h$ are $z$-equivalent in $G(L)$ for every field extension $L$ of $k$. 
But we are not going to prove it here. 
We intend to take it up later in a subsequent paper. }
\end{remark}

Now, to salvage the situation we make two cases in the proof of the main result. 
In the first case, we consider the field $k$ to be a finite field, where the finiteness of the number of $z$-classes of $G(k)$ for any algebraic group $G$ is clear. 
In the second case, we consider the field $k$ to be an infinite field, but still of type $(F)$, and then appeal to the theorem of Rosenlicht \cite{Ro} which says that $G(k)$ is dense in the algebraic group $G$ whenever $k$ is an infinite perfect field. 

To end the introduction, we also explain why we concentrate only on reductive groups. 
Sushil Bhunia recently proved in \cite{B} that if we take $G$ to be the group of upper triangular $n \times n$ matrices for $n \geq 6$ then the finiteness of the number of $z$-classes in $G(k)$ forces $k$ to be finite.
Thus, the finiteness of the number of $z$-classes is not true for non-reductive groups but it is true for reductive groups as we prove further in this paper. 

\section{$z$-classes in a reductive group over an algebraically closed field.}

In this section, we prove that $G(\bar{k})$, for a reductive linear algebraic group $G$ defined over an algebraically closed field $\bar{k}$, has only finitely many $z$-classes. 
We follow the proof of Steinberg from \cite[Cor. 1, page 107]{St} where only the good characteristics case was considered.
The proof we present below works for all characteristics.

\begin{theorem}\label{Fz-kbar}
Let $G$ be a reductive linear algebraic group defined over an algebraically closed field $\bar{k}$. 
The number of $z$-classes of $G(\bar{k})$ is finite.
\end{theorem}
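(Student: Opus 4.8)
The plan is to count centralizers rather than elements: two elements of $G(\bar{k})$ are $z$-equivalent precisely when their centralizers are conjugate in $G(\bar{k})$, so it suffices to show that the subgroups $C_G(g)$, as $g$ ranges over $G(\bar{k})$, fall into finitely many conjugacy classes. The main tool is the Jordan decomposition. Writing $g = g_s g_u$ with $g_s$ semisimple, $g_u$ unipotent and $g_s g_u = g_u g_s$, one has $C_G(g) = C_G(g_s) \cap C_G(g_u) = C_M(g_u)$, where $M := C_G(g_s)$ and $g_u$ is a unipotent element of $M$. Thus the problem splits into a semisimple part and a unipotent part, and I would treat these in turn.

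First I would bound the semisimple contribution. Fix a maximal torus $T$ of $G$; every semisimple element is conjugate to one lying in $T$, so I may assume $g_s \in T$. The identity component $C_G(g_s)^\circ$ is the reductive subgroup generated by $T$ together with the root subgroups $U_\alpha$ for those roots $\alpha$ with $\alpha(g_s) = 1$. Since the root system $\Phi$ is finite, only finitely many subsets $\{\alpha \in \Phi : \alpha(g_s)=1\}$ can occur, so $C_G(g_s)^\circ$ takes only finitely many values as a subgroup of $G$; the finite component group $C_G(g_s)/C_G(g_s)^\circ$ is a subquotient of the Weyl group $W = N_G(T)/T$, and so contributes only finitely many possibilities as well. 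Consequently the groups $M = C_G(g_s)$ fall into finitely many $G$-conjugacy classes, and each such $M$ is a (possibly disconnected) reductive group.

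Next I would handle the unipotent part inside a fixed $M$. Here $g_u$ is a unipotent element of the reductive group $M$, and the centralizer we are after is $C_M(g_u)$. The crucial input is that a reductive group over an algebraically closed field has only finitely many unipotent conjugacy classes; applying this to $M^\circ$ and using the finiteness of $M/M^\circ$ yields finitely many $M$-classes of unipotent elements, and hence finitely many centralizers $C_M(g_u)$ up to $M$-conjugacy. Since $M$-conjugate subgroups are a fortiori $G$-conjugate, each of the finitely many $M$'s contributes only finitely many conjugacy classes of centralizers; summing over the finitely many $M$'s shows that the collection $\{C_G(g)\}$ meets only finitely many $G$-conjugacy classes, which is exactly the assertion that $G(\bar{k})$ has finitely many $z$-classes.

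The main obstacle is the finiteness of the number of unipotent classes, valid in every characteristic: this is the deep ingredient (classical in good characteristic, but requiring the general results of Lusztig and others in bad characteristic), and it is precisely the step that lets the argument run uniformly in all characteristics as claimed. A secondary technical point is the bookkeeping for the possibly disconnected centralizers $C_G(g_s)$, where one must control both the connected part and the component group; the finiteness of $\Phi$ and of $W$ is what makes this harmless.
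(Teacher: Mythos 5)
Your proposal is correct and follows essentially the same route as the paper's proof: Jordan decomposition $g = g_sg_u$, finiteness of the centralizers of semisimple elements up to conjugacy via the root/Weyl-group data attached to a maximal torus, and the Richardson--Lusztig theorem on finiteness of unipotent classes applied inside each $M = C_G(g_s)$. If anything, you are slightly more explicit than the paper about the disconnectedness of $C_G(g_s)$ (passing to $M^\circ$ and controlling the component group), which is a point the paper's argument glosses over.
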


\begin{proof}
For an element $g \in G(\bar{k})$, we denote by $Z(g)$ the centraliser of $g$ in $G(\bar{k})$. 

We first prove that there are only finitely many $z$-classes of semisimple elements in $G(\bar{k})$. 
Let $g \in G(\bar{k})$ be a semisimple element.
Choose a maximal torus $T$ of $G$ such that $g \in T(\bar{k})$.
Then $Z(g)$ can be described in terms of the roots of the torus $T$ which act trivially on the element $g$ and certain elements of the Weyl group (\cite[page 98]{St}). 
Since this is a finite data, the set of centralisers in $G$, $Z(t)$, of elements $t \in T$ is a finite set. 
Further, since any two maximal tori of $G$ are conjugate over $\bar{k}$ the set of centralisers in $G(\bar{k})$, $Z(g)$, of semisimple elements $g \in G(\bar{k})$ is a finite set up to conjugacy in $G(\bar{k})$.
Thus, there are only finitely many $z$-classes of semisimple elements in $G(\bar{k})$.

Now, a general element $g \in G(\bar{k})$ admits a Jordan decomposition $g = g_sg_u$ where $g_s$ and $g_u$ are unique elements in $G(\bar{k})$ satisfying $g = g_sg_u = g_ug_s$ and such that $g_s$ is semisimple and $g_u$ is unipotent (\cite[page 29]{St}). 
Then $Z(g) = Z(g_s) \cap Z(g_u)$ which is the same as the centraliser of the unipotent element $g_u$ in the group $Z(g_s)$, which is itself a reductive group (\cite[page 98]{St}). 
It is a theorem due to Richardson (\cite{Ri}) and Lusztig (\cite{Lu}) that a reductive group defined over an algebraically closed field has only finitely many conjugacy classes of unipotent elements.
Thus, in each $Z(g_s)$ there are only finitely many conjugacy classes of unipotent elements, and hence only finitely many $z$-classes of unipotent elements, while there are only finitely many $Z(g_s)$ up to conjugacy in $G(\bar{k})$. 

Hence there are only finitely many $z$-classes of elements in $G(\bar{k})$. 
\end{proof}

\begin{remark}
In what follows in the remaining sections of this paper, we will be considering centralizers of various elements in $G(k)$ and then consider the normalizers of these centralizers in $G(\overline{k})$. 
For computing the first cohomology groups of these normalizers, we will need them (as also the centralizers of elements themselves) to be smooth as group schemes over $\overline{k}$. 
We are therefore forced to assume that the characteristic, if positive, of the base field $k$ is ``very good'' for the group $G$. 
Whenever $G$ is a simple group, the characteristic $p > 0$ is {\em good} if $p \ne 2$ if $G$ is not of type $A$, $p \ne 3$ if $G$ is exceptional and $p \ne 5$ if $G$ is of type $E_8$. 
For a semisimple group $G$, the prime $p$ is {\em good} if it is good for all of its simple factors and $p$ is {\em very good} if it is good for $G$ and it does not divide $n+1$ for any factor of $G$ of type $A_n$.

By a theorem of Cartier, the characteristic zero is always good, very good and even pretty good for any reductive group $G$. 
Further, every prime $p$ is very good for a torus. 
We do not go into the definition of a pretty good prime but refer the reader to the papers \cite{MT09, HS, MT16} for the technical details regarding these restrictions. 

This restriction on the characteristic of the field $k$ is assumed throughout the remaining part of the paper. 
\end{remark}

\section{Galois cohomology}

In this section, we discuss basic Galois cohomology. 
In particular, we prove that the forms of $z$-classes in a group $G$ defined over a perfect field $k$ correspond to a subset of the first Galois cohomology set, $H^1(\overline{k}/k, H(\bar{k}))$, of a suitable linear algebraic group $H$.
We give a short exposition of Galois cohomology below and refer the reader to the book by Serre (\cite[Chapter III Section 1]{Se}) for more details.

If a group $G$ acts by group automorphisms on a group $A$ then we say that $A$ is a $G$-module.
One then forms the first Galois cohomology set, $H^1(G, A)$, in the following way.
The functions $\phi:G \to A$ satisfying $\phi(gh) = \phi(g)g(\phi(h))$ for all $g, h \in G$ are called $1$-cocycles on $G$ with values in $A$. 
Two $1$-cocycles $\phi, \psi$ on $G$ with values in $A$ are called {\em cohomologous}, denoted by $\phi \sim \psi$, if there exists $a \in A$ such that $\phi(g) = a^{-1}\psi(g)g(a)$ for all $g \in G$.
It follows that the $\sim$ above is an equivalence relation. 

The set of all $1$-cocycles on $G$ with values in $A$, modulo the equivalence relation $\sim$, is called the {\em first Galois cohomology set} of $G$ with values in $A$ and is denoted by $H^1(G, A)$. 
The equivalence class of the trivial $1$-cocycle, sending every element of $G$ to the identity in $A$, is called a distinguished point in $H^1(G, A)$. 
Any $G$-module homomorphism $f: A \to B$ induces a natural map $\tilde{f}:H^1(G, A) \to H^1(G, B)$ sending the distinguished element to the distinguished element.

If $A$ is a linear algebraic group defined over a perfect field $k$ then, for any Galois extension $L$ of $k$, the Galois group $Gal(L/k)$ acts on the group $A(L)$, of $L$-rational points of $A$, and one forms the set $H^1(Gal(L/k), A(L))$. 
This set is normally denoted by $H^1(L/k, A(L))$. 

\begin{lemma}
Let $G$ be a linear algebraic group defined over an infinite perfect field $k$. 
If two elements $g, h \in G(k)$ are $z$-equivalent over $k$ then they are also $z$-equivalent over $\overline{k}$. 

In particular, there exists a function $\theta$ that sends the $z$-class of $g \in G(k)$ to the $z$-class of $g$ in $G(\overline{k})$. 
\end{lemma}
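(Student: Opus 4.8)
The plan is to reformulate $z$-equivalence as a statement about closed subgroups and then pass to Zariski closures using Rosenlicht's density theorem. Write $Z_G(x)$ for the centraliser of $x$ as a closed $k$-subgroup of $G$, so that $Z_G(x)(k)$ is the centraliser of $x$ in $G(k)$ and $Z_G(x)(\overline{k})$ its centraliser in $G(\overline{k})$; by the running hypothesis on the characteristic these subgroups are smooth. That $g$ and $h$ are $z$-equivalent over $k$ means that $a\,Z_G(g)(k)\,a^{-1} = Z_G(h)(k)$ for some $a \in G(k)$. Conjugation by $a$ is an automorphism of $G$ defined over $k$ and carries $Z_G(g)$ onto $Z_G(aga^{-1})$, so on replacing $g$ by $g' := aga^{-1} \in G(k)$ I reduce to the following assertion: if $g',h \in G(k)$ satisfy $Z_G(g')(k) = Z_G(h)(k)$, then $Z_G(g')(\overline{k}) = Z_G(h)(\overline{k})$. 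The identity element will then witness $z$-equivalence over $\overline{k}$.

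The main tool is Rosenlicht's theorem: since $k$ is infinite and perfect, the $k$-points of a connected smooth $k$-group are Zariski dense in it. I apply this to the identity component $Z_G(g')^{\circ}$, which is again defined over $k$: its $k$-points are dense in it, and they satisfy $Z_G(g')^{\circ}(k) \subseteq Z_G(g')(k) = Z_G(h)(k) \subseteq Z_G(h)(\overline{k})$. Taking Zariski closures gives $Z_G(g')^{\circ} \subseteq Z_G(h)$, and since $Z_G(g')^{\circ}$ is connected and contains the identity it lies in $Z_G(h)^{\circ}$. The symmetric argument yields the opposite inclusion, so the two identity components coincide; call the common group $Z^{\circ}$.

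The remaining task is to match the finite component groups $Z_G(g')/Z^{\circ}$ and $Z_G(h)/Z^{\circ}$, and this is the step I expect to be the real obstacle: the hypothesis constrains only the $k$-points, whereas $Z_G(g')(k)$ need not be dense in a disconnected $Z_G(g')$, because a component on which the Galois group acts without a fixed point carries no $k$-point. To handle it I would use the extra rigidity at hand. First, $h \in Z_G(h)(k) = Z_G(g')(k)$ shows that $g'$ and $h$ commute, and both centralisers normalise $Z^{\circ}$, so the comparison can be carried out among the cosets of $Z^{\circ}$. Every component of $Z_G(g')$ that carries a $k$-point is at once a component of $Z_G(h)$, since the two groups have identical $k$-points; the difficulty is confined to the components without rational points. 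For these I would argue by Galois descent, using that both $Z_G(g')$ and $Z_G(h)$ are $k$-defined subgroups of $G$ sharing the same identity component and the same $k$-points, together with the commuting relation between $g'$ and $h$, to force $Z_G(g')(\overline{k}) = Z_G(h)(\overline{k})$. It is precisely here that the infinite, perfect hypothesis is essential, through Rosenlicht's theorem: over a finite field the identity components are no longer pinned down by rational points, and this is exactly what lets the Borel-subgroup examples of the Introduction be $z$-equivalent over the ground field but not over its algebraic closure.
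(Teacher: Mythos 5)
Your proposal does not close, and the gap is exactly where you predicted it would be. The reduction to the case $Z_G(g')(k) = Z_G(h)(k)$ and the treatment of identity components are fine: Rosenlicht gives density of $Z_G(g')^{\circ}(k)$ in $Z_G(g')^{\circ}$, and taking Zariski closures yields $Z_G(g')^{\circ} = Z_G(h)^{\circ} =: Z^{\circ}$. But the final step --- showing that the components of $Z_G(g')$ and $Z_G(h)$ \emph{without} rational points also coincide --- is where the actual content of the lemma lies, and what you offer there (``I would argue by Galois descent, using \dots the commuting relation \dots to force $Z_G(g')(\overline{k}) = Z_G(h)(\overline{k})$'') is a statement of intent, not an argument. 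Nothing you write excludes the a priori possibility that, say, $Z_G(g') = Z^{\circ}\cdot\mu$ for some nontrivial finite $k$-group $\mu$ with $\mu(k)=1$ while $Z_G(h) = Z^{\circ}$: two $k$-subgroups can perfectly well share the same identity component and the same $k$-points yet be unequal. The paper's own Section 5.2 supplies the relevant phenomenon: inside $SL_3$ over $\mathbb{Q}$, the subgroups $U$ and $Z(u_\beta) = U \times \mu_3$ have identical $\mathbb{Q}$-points and identical identity components but are different groups. So ``same $k$-points and same identity component'' cannot, by itself or by generic Galois-descent reasoning, force equality; one would have to exploit that both groups are centralisers, and the proposal never does so. The commuting of $g'$ and $h$ is also not enough (in the example just cited, every element of $U(\mathbb{Q})$ commutes with every element of $U \times \mu_3$).

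For comparison, the paper's proof is a one-step application of the same tool, applied to the full centralisers rather than to their identity components: from $a\,Z_G(g)(k)\,a^{-1} = Z_G(h)(k)$ with $a \in G(k)$, take Zariski closures of both sides; invoking Rosenlicht's density statement for the $k$-defined groups $Z_G(g)$ and $Z_G(h)$ themselves, these closures are $Z_G(g)(\overline{k})$ and $Z_G(h)(\overline{k})$, and since conjugation by $a$ is a homeomorphism one gets $a\,Z_G(g)(\overline{k})\,a^{-1} = Z_G(h)(\overline{k})$ directly. Note that the paper quotes density ``for any linear algebraic group,'' without pausing over disconnectedness; your instinct that this is a delicate point is sound (density genuinely fails for disconnected groups such as $U \times \mu_3$ over $\mathbb{Q}$, since a component with no rational point is missed by the closure). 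But having chosen the more careful route, the burden of reassembling the component group is yours, and the proposal leaves that burden undischarged: as written it proves only that $Z_G(g')$ and $Z_G(h)$ have the same identity component, which is strictly weaker than the lemma.
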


\begin{proof}
Since $g, h$ are $z$-equivalent over $k$, the subgroups $Z_G(g)(k)$ and $Z_G(h)(k)$ are conjugate by an element, say $a \in G(k)$. 
By Rosenlicht's theorem (\cite{Ro}), the groups of $k$-rational points are Zariski-dense in the groups of $\overline{k}$-rational points for any linear algebraic group. 
It, therefore, follows that the subgroups $Z_G(g)(\overline{k})$ and $Z_G(h)(\overline{k})$ of $G(\overline{k})$ are conjugate by $a$. 
Thus, $g$ and $h$ are $z$-equivalent over $\overline{k}$. 

The extistence of the function $\theta$ now follows.
\end{proof}

Now, let $G$ be again a reductive group defined over an infinite perfect field $k$. 
For a subgroup $H$ of $G$, we denote by $[H]_k$ the $k$-conjugacy class of $H$ in $G$ and by $[H]_{\overline{k}}$ the $\overline{k}$-conjugacy class of $H$ in $G$. 
Let
$${\mathcal A}_g =\left\{[H]_k: H \leq G, H {\rm~is~defined~over~}k, H = a Z(g)a^{-1} {\rm~for~some~}a \in G(\overline{k})\right\}$$
the set of $k$-conjugacy classes of $k$-subgroups of $G$ which become conjugate to $Z(g)$ over $\overline{k}$.
Then we have the following result. 

\begin{lemma}\label{Fi-zc}
Let $G$ be a reductive group defined over a perfect infinite field $k$ and let $g \in G(k)$. 
Then ${\mathcal A}_g$ is in one-one correspondence with a subset of $H^1(\overline{k}/k, N_g(\overline{k}))$ where $N_g$ is the normaliser of $Z(g)$ in $G$. 
\end{lemma}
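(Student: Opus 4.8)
The plan is to realise ${\mathcal A}_g$ as the set of $k$-forms of the subgroup $Z(g)$ for the conjugation action of $G$, and then to classify these forms by Galois cohomology in the standard way. Write $\Gamma = \mathrm{Gal}(\overline{k}/k)$ and let $E$ denote the $G(\overline{k})$-orbit of $Z(g)$ under conjugation, that is, the set of all $\overline{k}$-subgroups of the form $a Z(g) a^{-1}$ with $a \in G(\overline{k})$. The group $\Gamma$ acts on $E$ compatibly with the conjugation action of $G(\overline{k})$, and because $g \in G(k)$ the centraliser $Z(g)$ is defined over $k$, hence is a $\Gamma$-fixed point of $E$. The stabiliser of this point in $G(\overline{k})$ is exactly $N_g(\overline{k})$, the normaliser of $Z(g)$. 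Thus $E$ is a homogeneous space under $G(\overline{k})$ with a distinguished rational base point whose stabiliser is $N_g(\overline{k})$, which is precisely the situation classified by the twisting formalism of Serre (\cite[Chapter~I, \S5.4]{Se}).

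The explicit correspondence I would set up is the following. Given $[H]_k \in {\mathcal A}_g$, choose $a \in G(\overline{k})$ with $H = a Z(g) a^{-1}$. Since $H$ is defined over $k$ it is $\Gamma$-stable, so for each $s \in \Gamma$ we have $s(a) Z(g) s(a)^{-1} = a Z(g) a^{-1}$, whence $a^{-1} s(a) \in N_g(\overline{k})$. The assignment $c_H(s) = a^{-1} s(a)$ is a $1$-cocycle on $\Gamma$ with values in $N_g(\overline{k})$; as $a$ lies in $G(K)$ for some finite extension $K/k$, this cocycle factors through a finite quotient of $\Gamma$ and so is continuous. I would then check that the class of $c_H$ in $H^1(\overline{k}/k, N_g(\overline{k}))$ is independent of the choice of $a$ (changing $a$ by an element of $N_g(\overline{k})$ alters $c_H$ only by a coboundary) and of the representative $H$ within its $k$-conjugacy class (replacing $H$ by $b H b^{-1}$ with $b \in G(k)$ leaves $c_H$ unchanged). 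This produces a well-defined map ${\mathcal A}_g \to H^1(\overline{k}/k, N_g(\overline{k}))$.

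Injectivity is the crux of the explicit argument, and it is proved by reversing the construction. If $H_i = a_i Z(g) a_i^{-1}$ for $i = 1, 2$ and the cocycles $c_{H_1}, c_{H_2}$ are cohomologous via some $n \in N_g(\overline{k})$, then a short computation shows that $b = a_2 n a_1^{-1}$ satisfies $s(b) = b$ for all $s \in \Gamma$, so that $b \in G(k)$, while $b H_1 b^{-1} = H_2$ because $n$ normalises $Z(g)$; hence $[H_1]_k = [H_2]_k$. In fact the image of the map is exactly the kernel of the natural map $H^1(\overline{k}/k, N_g(\overline{k})) \to H^1(\overline{k}/k, G(\overline{k}))$, so in particular ${\mathcal A}_g$ is in one-one correspondence with a subset of $H^1(\overline{k}/k, N_g(\overline{k}))$, as claimed.

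The step I expect to require the most care is not the cocycle bookkeeping but the identification of the domain: I must establish that the $\Gamma$-fixed points of the orbit $E$ are precisely the $\overline{k}$-conjugates of $Z(g)$ that are defined over $k$, so that the $G(k)$-orbits on $E^{\Gamma}$ coincide with ${\mathcal A}_g$. This is where the perfectness of $k$ enters, through Galois descent for closed subgroups, together with the smoothness of $Z(g)$ and $N_g$ guaranteed by the standing very good characteristic hypothesis; without smoothness the passage between subgroup schemes and their groups of $\overline{k}$-points, and hence the very meaning of $H^1(\overline{k}/k, N_g(\overline{k}))$, would be delicate. Once this identification is in place, the Lemma becomes a direct instance of the general bijection between the orbits of $G(k)$ on the rational points of a homogeneous space and the relevant kernel inside the first cohomology of the stabiliser.
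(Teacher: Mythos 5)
Your proposal is correct and follows essentially the same route as the paper: you build the cocycle $\sigma \mapsto a^{-1}\sigma(a)$ valued in $N_g(\overline{k})$ from a conjugating element $a$, check independence of the choices, and prove injectivity by showing that a cobounding element $n$ produces the rational conjugator $a_2 n a_1^{-1} \in G(k)$, which is exactly the paper's argument. The extra framing via Serre's twisting formalism and the remark identifying the image with the kernel of $H^1(\overline{k}/k, N_g(\overline{k})) \to H^1(\overline{k}/k, G(\overline{k}))$ go beyond what the lemma needs (the latter is the content of the paper's Theorem 3.3), but they are consistent with it.
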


\begin{proof}
Since $g \in G(k)$ is fixed, we shall denote the set ${\mathcal A}_g$ simply by $\mathcal A$ and $N_g$ by $N$.
We define a map $\Phi: \mathcal A \to H^1(\overline{k}/k, N(\overline{k}))$ and prove that it is one-one. 
To that end, for every $[H]_k \in \mathcal A$, we first define a $1$-cocycle from $Gal(\overline{k}/k)$ to $N(\overline{k})$. 
Later we will use the natural map from the set of $1$-cocycles to the $H^1$ to define the map $\Phi$. 

Let $[H]_k \in \mathcal A$.
In particular, $H$ is conjugate to $Z(g)$ by an element of $G(\overline{k})$.
Let $a \in G(\bar{k})$ be one such element, so that $H = a Z(g)a^{-1}$.
If $\sigma \in Gal(\overline{k}/k)$ then applying $\sigma$ to the above equality we get
$$\sigma(H) = \sigma(a) \sigma (Z(g))\sigma(a^{-1}) .$$
Since $H$ and $Z(g)$ are defined over $k$, we get that $\sigma(H) = H$ and $\sigma(Z(g)) = Z(g)$ and hence $a^{-1}\sigma(a)$ normalises $Z(g)$. 
It follows that the function sending $\sigma$ to $a^{-1}\sigma(a)$ is a $1$-cocycle taking values in $N(\overline{k})$. 
We denote this element by $\phi(H)$. 

If we choose a different element $a' \in G(\overline{k})$ conjugating $H$ to $Z(g)$ then we get a $1$-cocycle which is cohomologus to $\phi(H)$. 
One also checks easily that if we take any $k$-subgroup $H_1$ of $G$ which is conjugate to $H$ by an element of $G(k)$ then the element $\phi(H_1)$ is also cohomologus to $\phi(H)$. 
This gives the required map $\Phi$.

It remains to prove that the map $\Phi$ is injective. 
Let $[H_1]_k$ and $[H_2]_k$ be in $\mathcal A$ so that the corresponding $1$-cocyles $\phi([H_1]_k): \sigma \mapsto a^{-1}\sigma(a)$ and $\phi([H_2]_k) : \sigma \mapsto a'^{-1}\sigma(a')$ are cohomologus. 
Then we have $c \in H(\bar{k})$ such that 
$$\phi([H_1]_k)(\sigma)= c^{-1}\phi([H_2]_k)(\sigma) \sigma(c) .$$
Then, $a^{-1}\sigma(a) = c^{-1} a'^{-1}\sigma(a') \sigma(c)$ for all $\sigma$. 
This gives that the element $a'ca^{-1}$ is in $G(k)$, further it conjugates $H_1(k)$ to $H_2(k)$ and hence $H_1$ to $H_2$.
Thus, $[H_1]_k = [H_2]_k$. 
\end{proof}

For the purpose of proving the main result of this paper, the above result is enough, however, below we give an exact description of the subset of $H^1(\bar{k}/k, N_g(\bar{k}))$ which is in bijective correspondence with ${\mathcal A}_g$ in the theorem below.
This description will be useful in the last section where we discuss examples of $z$-classes.

\begin{theorem}
We continue with the notations of Lemma $3.2.$
Let $i:N_g(\bar{k}) \to G(\bar{k})$ denote the natural inclusion and $\tilde{i}:H^1(\bar{k}/k, N_g(\bar{k})) \to H^1(\bar{k}/k, G(\bar{k}))$ be the induced map. 
Then the subset of $H^1(\bar{k}/k, N_g(\bar{k}))$ which is in bijective correspondence with ${\mathcal A}_g$ is the set of elements of $H^1(\bar{k}/k, N_g(\bar{k}))$ which are mapped to the distinguished element of $H^1(\bar{k}/k, G(\bar{k}))$ under $\tilde{i}$.
\end{theorem}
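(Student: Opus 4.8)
The plan is to show that the injective map $\Phi: {\mathcal A}_g \to H^1(\overline{k}/k, N_g(\overline{k}))$ constructed in Lemma~\ref{Fi-zc} has image exactly equal to the fibre of $\tilde i$ over the distinguished point; since $\Phi$ is injective, this identifies ${\mathcal A}_g$ with that fibre, which is the assertion. I would prove the two inclusions separately, keeping in mind throughout that $\Phi([H]_k)$ is represented by the cocycle $\sigma \mapsto a^{-1}\sigma(a)$, where $a \in G(\overline{k})$ is any element with $H = aZ(g)a^{-1}$.

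For the inclusion asserting that every element of the image is killed by $\tilde i$, I observe that the cocycle $\sigma \mapsto a^{-1}\sigma(a)$, once pushed into $G(\overline{k})$ via $i$, is by definition the coboundary of $a$. Hence $\tilde i(\Phi([H]_k))$ is the distinguished element of $H^1(\overline{k}/k, G(\overline{k}))$ for every $[H]_k \in {\mathcal A}_g$. This direction is immediate and uses nothing beyond the construction of $\Phi$.

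The substantive direction is the reverse inclusion. I start with a class $[\phi] \in H^1(\overline{k}/k, N_g(\overline{k}))$ such that $\tilde i([\phi])$ is distinguished, and fix a representative cocycle $\phi$. By definition of $\tilde i$ and of the distinguished point, the $G(\overline{k})$-valued cocycle $i \circ \phi$ is a coboundary, so there is $a \in G(\overline{k})$ with $\phi(\sigma) = a^{-1}\sigma(a)$ for all $\sigma \in Gal(\overline{k}/k)$. I then set $H := a Z(g) a^{-1}$ and check that $H$ is stable under the Galois action: using $\sigma(a) = a\,\phi(\sigma)$, the fact that $Z(g)$ is defined over $k$ (because $g \in G(k)$), and crucially that $\phi(\sigma) \in N_g(\overline{k})$ normalises $Z(g)$, one computes $\sigma(H) = \sigma(a) Z(g)\sigma(a)^{-1} = a\,\phi(\sigma)Z(g)\phi(\sigma)^{-1}a^{-1} = aZ(g)a^{-1} = H$. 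Since $k$ is perfect, Galois descent then gives that $H$ is a closed subgroup defined over $k$, so $[H]_k \in {\mathcal A}_g$, and by construction $\Phi([H]_k) = [\phi]$.

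The point at which care is genuinely needed is the descent step: manufacturing from the abstract vanishing of $\tilde i([\phi])$ an honest $k$-subgroup $H$. This is exactly where the condition that the cocycle takes values in $N_g$ is indispensable, for it is what forces $\sigma(H) = H$ and thereby singles out the normaliser $N_g$ rather than any smaller group. The appeal to Galois descent for closed subgroups requires $k$ perfect (already assumed) together with the smoothness of the groups involved, which is guaranteed by the standing very good characteristic hypothesis recorded in the remark following Theorem~\ref{Fz-kbar}. Beyond this, the argument is the standard untwisting identification of the forms of a subgroup that are embeddable over $k$ with the kernel of $H^1(\text{normaliser}) \to H^1(G)$, and the remaining verifications, namely independence of the chosen representatives and compatibility with passage to cohomology classes, are routine.
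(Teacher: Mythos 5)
Your proof is correct and is essentially the argument the paper itself invokes: the paper omits the proof, referring to \cite[Lemma 2.1]{G} (the analogous statement for maximal tori, said to work verbatim here), and that argument is exactly your two-inclusion untwisting: the image of $\Phi$ is killed by $\tilde{i}$ since $\sigma \mapsto a^{-1}\sigma(a)$ is by construction a coboundary in $G(\bar{k})$, and conversely any class killed by $\tilde{i}$ yields, via Galois descent over the perfect field $k$, the $k$-subgroup $H = aZ(g)a^{-1}$ whose class maps to it. Your identification of where normality of the cocycle values is used (to force $\sigma(H)=H$) and of the smoothness/very good characteristic hypothesis matches the paper's standing assumptions.
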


We do not give the proof of this result but refer to the paper \cite[Lemma 2.1]{G} where a similar result is proved for maximal tori in a reductive group $G$. 
The same proof, verbatim, works in this case too.
The following result can also be proved by similar methods.

\begin{theorem}
If $G$ is a reductive linear algebraic group defined over a field $k$ and $g \in G(k)$ then the set of conjugacy classes in $G(k)$ which when base changed to $\bar{k}$ become equal to the conjugacy class of $g$ in $G(\bar{k})$ is in bijection with the subset of $H^1(\bar{k}/k, Z(g)(\bar{k}))$ consisting of elements which are mapped to the distinguished element under the natural map $\tilde{i}$.
\end{theorem}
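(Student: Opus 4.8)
The statement is the element-level analogue of Lemma~\ref{Fi-zc}: where that lemma handled $k$-conjugacy classes of subgroups that become $\bar k$-conjugate to $Z(g)$ and landed in the $H^1$ of the normaliser $N_g$, here we handle $G(k)$-conjugacy classes of elements that become $\bar k$-conjugate to $g$ and we land in the $H^1$ of the centraliser $Z(g)$. Conceptually this is the classical description of the $G(k)$-orbits on the $k$-points of the homogeneous space $G/Z(g)$ in terms of the kernel of the map $H^1(\bar k/k, Z(g)(\bar k)) \to H^1(\bar k/k, G(\bar k))$; the plan, however, is to carry it out directly at the level of cocycles, exactly as in the proof of Lemma~\ref{Fi-zc}.

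First I would let $S$ be the set of elements $h \in G(k)$ that are conjugate to $g$ by some element of $G(\bar k)$, so that the set to be described is precisely the set of $G(k)$-conjugacy classes contained in the $G(k)$-stable set $S$. To each $h \in S$, writing $h = a g a^{-1}$ with $a \in G(\bar k)$ and applying $\sigma \in \mathrm{Gal}(\bar k/k)$, the $k$-rationality of both $g$ and $h$ forces $a^{-1}\sigma(a)$ to lie in $Z(g)(\bar k)$; thus $\sigma \mapsto a^{-1}\sigma(a)$ is a $1$-cocycle with values in $Z(g)(\bar k)$, and its class in $H^1(\bar k/k, Z(g)(\bar k))$ is independent of the choice of $a$. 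The same bookkeeping as in Lemma~\ref{Fi-zc} shows that replacing $h$ by a $G(k)$-conjugate changes the cocycle only within its class, so one obtains a well-defined map $\Psi$ from the $G(k)$-conjugacy classes in $S$ to $H^1(\bar k/k, Z(g)(\bar k))$, and that $\Psi$ is injective: if the cocycles attached to $h_1 = a_1 g a_1^{-1}$ and $h_2 = a_2 g a_2^{-1}$ are cohomologous via some $c \in Z(g)(\bar k)$, then $a_2 c a_1^{-1} \in G(k)$ and it conjugates $h_1$ to $h_2$.

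It then remains to identify the image of $\Psi$ with $\ker(\tilde i)$. In one direction this is immediate, since each cocycle $\sigma \mapsto a^{-1}\sigma(a)$ becomes a coboundary once its values are viewed in $G(\bar k)$ and is therefore sent to the distinguished element of $H^1(\bar k/k, G(\bar k))$ by $\tilde i$. For the converse, given a cocycle $\sigma \mapsto c_\sigma$ with values in $Z(g)(\bar k)$ whose image under $\tilde i$ is the distinguished element, I would write $c_\sigma = a^{-1}\sigma(a)$ for a suitable $a \in G(\bar k)$ and put $h = a g a^{-1}$; using $\sigma(g) = g$ and $c_\sigma \in Z(g)(\bar k)$ one checks that $\sigma(h) = a c_\sigma g c_\sigma^{-1} a^{-1} = h$ for every $\sigma$, so that $h \in S$ and $\Psi$ sends its class to the prescribed cocycle.

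I expect the only step requiring genuine input beyond formal cocycle manipulation to be this last, converse, direction: it is exactly here that the hypothesis of triviality in $H^1(\bar k/k, G(\bar k))$ is used, namely to manufacture a $k$-rational element $h$ out of an abstract $Z(g)$-valued cocycle. Everything else — the cocycle condition, independence of the choice of $a$, well-definedness on $G(k)$-conjugacy classes, and injectivity — is the verbatim element-level translation of the argument already given for subgroups in Lemma~\ref{Fi-zc}, and requires no new ideas.
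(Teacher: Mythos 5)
Your proposal is correct and follows exactly the route the paper intends: it is the element-level transcription of the cocycle argument of Lemma~3.2 (form the cocycle $\sigma \mapsto a^{-1}\sigma(a)$ with values in $Z(g)(\bar k)$, check well-definedness and injectivity, then identify the image with the preimage of the distinguished element under $\tilde i$, as in Theorem~3.3 and \cite[Lemma 2.1]{G}), which is precisely what the paper means by ``can also be proved by similar methods.'' The only point worth flagging is that the descent steps (concluding $a_2ca_1^{-1} \in G(k)$ and $h = aga^{-1} \in G(k)$ from Galois-invariance) use the perfectness of $k$, a standing hypothesis of the paper.
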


\begin{definition}
A perfect field $k$ is called a {\em field of type $(F)$} if for any integer $n$ there exist only finitely many extensions of $k$ of degree $n$ (in a fixed algebraic closure $\overline{k}$ of $k$).
\end{definition}

This notion is introduced by Borel-Serre in \cite[III $\S$ 4]{BSe}. 
The examples of fields of type $(F)$ include the field $\mathbb{R}$ of real numbers, a finite field, a $p$-adic field and the field $\mathbb{C}((T))$ of formal power series in one variable over $\mathbb{C}$ whereas non-examples of such fields include number fields and $\overline{\mathbb{F}}_q((T))$, the latter because of the failure of the perfect-ness.

The fields of type $(F)$ are expected to have easier arithmetic. 
For instance, over a field of type $(F)$ one has only finitely many isomorphism classes of objects like division algebras, quadratic forms, and so on.
This follows from the following result proved by Borel and Serre.

\begin{theorem}[Th\'{e}or\`{e}me 6.2, Borel-Serre]\label{BoSe}
Let $k$ be a field of type $(F)$, and let $H$ be a linear algebraic group defined over $k$. 
The set $H^1(\overline{k}/k, H(\bar{k}))$ is finite. 
\end{theorem}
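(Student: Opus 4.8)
The plan is to prove finiteness of $H^1(\overline{k}/k, H(\overline{k}))$ by d\'evissage on the structure of the linear algebraic group $H$, reducing everything to two ``atomic'' cases: finite \'etale groups and tori. The engine throughout is the twisting technique for a short exact sequence $1 \to A \to B \to C \to 1$ of $k$-groups: the map $H^1(k,B) \to H^1(k,C)$ has fibres that are described by $H^1(k, {}_\xi A)$ for the twists ${}_\xi A$ of $A$ by cocycles lifting classes of $C$ (Serre, Galois cohomology, I.5.5). Granting this, finiteness of $H^1(k,B)$ follows once one knows that $H^1(k,C)$ is finite and that $H^1(k, {}_\xi A)$ is finite for each relevant $\xi$.

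First I would strip off the component group and the unipotent radical. The sequence $1 \to H^0 \to H \to \pi_0(H) \to 1$ reduces the problem, via the twisting lemma, to the connected case together with the finite \'etale group $\pi_0(H)$. For connected $H$ the sequence $1 \to R_u(H) \to H \to H^{\mathrm{red}} \to 1$ lets me discard the unipotent radical: over a perfect field every connected unipotent group is split, with successive quotients isomorphic to $\mathbb{G}_a$, and $H^1(k, \mathbb{G}_a) = 0$ by the normal basis theorem; since every twist of a unipotent group is unipotent, all the relevant fibres are trivial and $H^1(k, R_u(H)) = 0$. This is the only place where the hypothesis that $k$ is perfect is used. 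The problem is thus reduced to the reductive case plus finite \'etale groups.

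For the finite \'etale case, a class of $H^1(k, F)$ corresponds, up to conjugacy, to a continuous homomorphism $\mathrm{Gal}(\overline{k}/k) \to F(\overline{k})$ for the appropriate action; its image is finite of order at most $|F(\overline{k})|$, so it factors through a Galois extension of bounded degree. Type $(F)$ provides only finitely many such extensions, and for each only finitely many homomorphisms, so $H^1(k,F)$ is finite. For a connected reductive $G$ I would exploit that $G$, and every inner twist of it, contains a maximal torus defined over $k$; consequently every $G$-torsor reduces its structure group to the normaliser $N = N_G(T)$ of a maximal torus, that is, $H^1(k, N) \to H^1(k, G)$ is surjective. It therefore suffices to bound $H^1(k, N)$. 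Applying the twisting lemma to $1 \to T \to N \to W \to 1$, with $W$ the finite Weyl group, reduces this to the finite case, already settled, and to the finiteness of $H^1(k, T')$ for the torus $T$ and its finitely many twists by cocycles valued in $W$.

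Everything thus comes down to the torus case, which I expect to be the main obstacle and the only point where the arithmetic of type $(F)$ fields is genuinely needed. A torus $T$ splits over a finite Galois extension $K/k$ whose degree is bounded, since the image of the Galois action in $\mathrm{GL}_r(\mathbb{Z})$ is a finite subgroup whose order is bounded by Minkowski's theorem; so by type $(F)$ there are only finitely many possibilities for $K$ and for the action, and inflation--restriction gives $H^1(k,T) = H^1(\mathrm{Gal}(K/k), T(K))$. To see this last group is finite I would choose a flasque resolution $1 \to S \to P \to T \to 1$ by a quasi-trivial torus $P$, a product of Weil restrictions $R_{K_i/k}\mathbb{G}_m$, for which $H^1(k,P) = 0$ by Shapiro's lemma and Hilbert~90; the cohomology sequence then embeds $H^1(k,T)$ into $H^2(k,S)$, and one is reduced to the finiteness of the relevant torsion in $H^2$ of tori, and ultimately to that of $H^i(k,\mu_n)$, namely of $k^\times/(k^\times)^n$ and $\mathrm{Br}(k)[n]$. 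These cohomological finiteness statements are exactly the content that type $(F)$ is designed to yield, and establishing them carefully --- rather than any of the group-theoretic d\'evissage --- is where the real work lies.
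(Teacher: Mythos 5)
The paper does not prove this statement itself; it quotes it from Borel--Serre, so your proposal can only be measured against the original proof (Borel--Serre, Th\'eor\`eme 6.2; see also Serre, \emph{Galois cohomology}, III.\S4). Your d\'evissage is exactly theirs: strip the component group, kill the unipotent radical using perfectness, reduce a connected reductive group to the normaliser $N=N_G(T)$ of a maximal torus via surjectivity of $H^1(k,N)\to H^1(k,G)$ (valid because every twist of $G$ again has a maximal torus over $k$), then use $1\to T\to N\to W\to 1$ and twisting, so that everything rests on finite \'etale groups and tori. Your treatment of the finite case --- cocycles factor through Galois extensions of degree bounded in terms of $|F|$ and the degree of the extension splitting the action, and type $(F)$ leaves only finitely many such extensions and finitely many cocycles for each --- is the correct and standard use of the hypothesis.

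The gap is in the torus case, the one place you yourself flag as carrying the real work. After correctly reducing to $H^1(\mathrm{Gal}(K/k),T(K))$, you embed $H^1(k,T)$ into $H^2(k,S)$ via a flasque resolution and assert that the finiteness of the relevant torsion in $H^2$ --- ultimately of $\mathrm{Br}(k)[n]=H^2(k,\mu_n)$ --- is ``exactly the content that type $(F)$ is designed to yield.'' It is not. Type $(F)$ yields finiteness of $H^1$ with \emph{finite} coefficients because a $1$-cocycle with values in a finite module $A$ factors through a quotient $\mathrm{Gal}(L/k)$ with $[L:k]$ bounded in terms of $|A|$; no such bounded factorisation holds for $2$-cocycles, since $H^2(k,A)=\varinjlim_U H^2(\mathrm{Gal}(\overline{k}/k)/U,\,A)$ runs over infinitely many quotients with no uniform bound, and finiteness of $H^2(k,\mu_n)$ for fields of type $(F)$ is not a consequence of the hypothesis by any argument you give (the classical proof deliberately avoids ever entering $H^2$). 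So your reduction replaces the theorem by a statement that is, if anything, harder. The repair is short and uses only what you have already established: restriction $H^1(k,T)\to H^1(K,T)=0$ vanishes by Hilbert 90, and corestriction--restriction is multiplication by $n=[K:k]$, so $H^1(k,T)$ is killed by $n$; the Kummer sequence $1\to T[n]\to T\xrightarrow{\,n\,}T\to 1$ (exact on $\overline{k}$-points because $\overline{k}^\times$ is divisible) then exhibits $H^1(k,T)$ as a quotient of $H^1(k,T[n])$, which is finite by your finite-module case. This is how Borel--Serre conclude, entirely within $H^1$.
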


\section{Main theorems and their proofs}

\begin{theorem}\label{Fi-g}
Let $G$ be a reductive algebraic group defined over an infinite field $k$ of type $(F)$. 
Let $g \in G(k)$ and let $z_{g, \overline{k}}$ denote the $z$-class of $g$ in $G(\overline{k})$. 
The number of $z$-classes of elements in $G(k)$ which on the base change to $\bar{k}$ equal $z_{g, \overline{k}}$ is finite.
\end{theorem}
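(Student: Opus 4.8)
The plan is to identify the fibre in question with the set $\mathcal{A}_g$ of Lemma~\ref{Fi-zc} and then to invoke the finiteness of Galois cohomology over a field of type $(F)$. First I would unwind what it means for the $z$-class of an element $h \in G(k)$ in $G(\overline{k})$ to equal $z_{g,\overline{k}}$: this says precisely that $Z(h)$ and $Z(g)$ are conjugate over $\overline{k}$. Since $h \in G(k)$ and the characteristic of $k$ is very good for $G$, the centraliser $Z(h)$ is a smooth $k$-subgroup of $G$, so its $k$-conjugacy class $[Z(h)]_k$ is a well-defined element of $\mathcal{A}_g$.

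Next I would show that the assignment $h \mapsto [Z(h)]_k$ descends to an injection from the set of $z$-classes over $k$ lying in the fibre into $\mathcal{A}_g$. Two elements $h_1, h_2 \in G(k)$ of the fibre are $z$-equivalent over $k$ exactly when $Z(h_1)(k)$ and $Z(h_2)(k)$ are conjugate in $G(k)$; by the first lemma of Section~3, whose proof rests on Rosenlicht's density theorem and hence on the hypotheses that $k$ is infinite and perfect, this holds if and only if the algebraic centralisers $Z(h_1)$ and $Z(h_2)$ are themselves conjugate by an element of $G(k)$, that is, $[Z(h_1)]_k = [Z(h_2)]_k$. Consequently the map is both well defined and injective, and the number of $z$-classes in the fibre is bounded above by the cardinality of $\mathcal{A}_g$.

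Finally I would close the argument by applying Lemma~\ref{Fi-zc}, which places $\mathcal{A}_g$ in one-one correspondence with a subset of $H^1(\overline{k}/k, N_g(\overline{k}))$, where $N_g$ is the normaliser of $Z(g)$ in $G$; this $N_g$ is again a linear algebraic group defined over $k$, the smoothness coming once more from the very good characteristic assumption. Since $k$ is of type $(F)$, Theorem~\ref{BoSe} of Borel--Serre guarantees that $H^1(\overline{k}/k, N_g(\overline{k}))$ is finite, whence $\mathcal{A}_g$ is finite, and therefore so is the fibre.

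I expect the main obstacle to be the injectivity step rather than the cohomological finiteness, which is essentially quoted. The delicate point is that $z$-equivalence over $k$ is phrased in terms of $G(k)$-conjugacy of the groups of $k$-rational points $Z(h_i)(k)$, whereas $\mathcal{A}_g$ records $k$-conjugacy of the algebraic centralisers $Z(h_i)$; bridging these two notions is exactly where the hypotheses that $k$ is infinite and perfect (giving Zariski density of $G(k)$) and that the characteristic is very good (giving smoothness and definedness over $k$ of the centralisers and their normalisers) become indispensable.
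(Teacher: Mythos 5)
Your proposal is correct and takes essentially the same approach as the paper: the fibre over $z_{g,\overline{k}}$ is identified with a subset of $\mathcal{A}_g$, which by Lemma~\ref{Fi-zc} injects into $H^1(\overline{k}/k, N_g(\overline{k}))$, and this set is finite by the Borel--Serre theorem since $k$ is of type $(F)$. Your careful treatment of well-definedness and injectivity of $h \mapsto [Z(h)]_k$ via Rosenlicht's density theorem simply spells out what the paper compresses into the one-line assertion that the fibre ``is a subset of $\mathcal{A}_g$.''
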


\begin{proof}
The $z$-class of an element in $G(k)$ is basically the conjugacy class of the centralizer subgroup of that element in $G(k)$. 
The $z$-classes of elements of $G(k)$ which, over $\overline{k}$, equal $z_{g, \overline{k}}$ is a subset of $\mathcal{A}_g$. 
By Lemma~\ref{Fi-zc}, the set ${\mathcal A}_g$ is in a one-one correspondence with a subset of $H^1(\overline{k}/k, N_g(\overline{k}))$.
Since $k$ is a field of type $(F)$, by Theorem~\ref{BoSe}, the set $H^1(\overline{k}/k, N_g(\bar{k}))$ is finite and hence ${\mathcal A}_g$ is finite.  
\end{proof}

\begin{remark}
With above notations, the set of the $z$-classes of elements of $G(k)$ which, over $\overline{k}$, equal $z_{g, \overline{k}}$ can be a proper subset of $\mathcal{A}_g$. 
We give an example towards the end of the next section explaining this. 
\end{remark}

\begin{theorem}\label{main-theorem}
Let $k$ be a field of type $(F)$ and let $G$ be a reductive algebraic group defined over $k$. 
The group $G(k)$ has only finitely many $z$-classes.
\end{theorem}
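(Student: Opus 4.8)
The plan is to reduce the finiteness over $k$ to the already-established finiteness over $\bar{k}$ by a fibring argument. Write $X$ for the set of $z$-classes of $G(k)$ and $Y$ for the set of $z$-classes of $G(\bar{k})$. The strategy is to produce a map $\theta \colon X \to Y$, to note that $Y$ is finite by Theorem~\ref{Fz-kbar}, to show that every fibre of $\theta$ is finite, and then to conclude that $X$ is finite as a finite union of finite sets. The delicate point, already flagged by the examples preceding the remark in the introduction, is that such a $\theta$ need not exist in general, so the proof must be split according to whether $k$ is finite or infinite.

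First I would dispose of the case in which $k$ is a finite field. Here $G(k)$ is itself a finite group, so it has only finitely many elements and \emph{a fortiori} only finitely many centralizer subgroups; in particular it has only finitely many $z$-classes, and there is nothing more to prove. This case genuinely requires separate treatment, since the map $\theta$ is unavailable over a finite field: the examples exhibit elements that are $z$-equivalent over $\mathbb{F}_2$ (respectively $\mathbb{F}_3$) but not over the algebraic closure.

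Now suppose $k$ is infinite of type $(F)$, so that $k$ is perfect by definition. Here I would invoke Lemma 3.1 to see that the map $\theta$ sending the $z$-class of $g \in G(k)$ to the $z$-class of $g$ in $G(\bar{k})$ is well defined: if $g, h \in G(k)$ are $z$-equivalent over $k$, then Rosenlicht's density theorem forces them to be $z$-equivalent over $\bar{k}$, so $\theta$ does not depend on the chosen representative. Next I would observe that $Y$ is finite by Theorem~\ref{Fz-kbar}, and that for each $y \in Y$ in the image of $\theta$, after fixing $g \in G(k)$ with $z_{g, \bar{k}} = y$, the fibre $\theta^{-1}(y)$ is exactly the set of $z$-classes of elements of $G(k)$ whose base change to $\bar{k}$ equals $z_{g, \bar{k}}$, a set proved finite in Theorem~\ref{Fi-g}. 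Finally, writing $X = \bigcup_{y \in Y} \theta^{-1}(y)$ as a finite union of finite sets gives the finiteness of $X$, which settles the infinite case and hence the theorem.

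The hard part, and the conceptual core of the whole argument, is really Theorem~\ref{Fi-g}, which drives this last step: it encodes the $z$-classes lying over a fixed $\bar{k}$-class as a subset of the Galois cohomology set $H^1(\bar{k}/k, N_g(\bar{k}))$ via Lemma~\ref{Fi-zc}, and then applies the Borel--Serre finiteness Theorem~\ref{BoSe} for fields of type $(F)$. The only hypothesis doing work beyond type $(F)$ is the standing very-good-characteristic assumption, which ensures that the centralizers $Z(g)$ and their normalizers $N_g$ are smooth, so that this cohomological bookkeeping is valid.
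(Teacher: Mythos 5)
Your proposal is correct and follows essentially the same route as the paper's own proof: the same split into finite and infinite $k$, the same map $\theta$ from Lemma 3.1 (via Rosenlicht density), finiteness of the target from Theorem~\ref{Fz-kbar}, and finiteness of the fibres from Theorem~\ref{Fi-g}. Your added remarks correctly identify why the case split is forced and where the type-$(F)$ and very-good-characteristic hypotheses enter.
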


\begin{proof}
The theorem is clear if $k$ is a finite field. 

Now we assume that $k$ is an infinite field.
Let $X$ denote the set of $z$-classes in $G(k)$ and $Y$ denote the set of $z$-classes in $G(\overline{k})$.  
There is a natural map, by Lemma 3.1, $\theta: X \to Y$ which sends the $z$-class of $g$ in $G(k)$ to the $z$-class of $g$ in $G(\overline{k})$. 
By Theorem~\ref{Fz-kbar}, the set $Y$ is a finite set.
Further, by the previous theorem, Theorem~\ref{Fi-g}, each fiber of $\theta$ is a finite set. 
Hence $X$ itself is a finite set.
\end{proof}

\section{Examples}
This section deals with various examples of computations of $z$-classes. 
We first show that if a perfect field $k$ is not of type $(F)$ then the main theorem of this paper fails for $k$ in the sense that there exists a reductive group $G$ defined over $k$, of type $A$, in fact, such that $G(k)$ has infinitely many $z$-classes. 

\begin{lemma}
Let $k$ be a perfect field and $G=SL_n$ or $GL_n$. 
Fix an element $g\in G(k)$. 
Then ${\mathcal A}_g$ (the set of $k$-conjugacy classes of $k$-subgroups of $G$ which are $\overline{k}$-conjugate to $Z(g)$), is in one-one correspondence with $H^1(\overline k/k, N_g(\overline k))$ where $N_g=N(Z(g))$.

Further, the set of conjugacy classes in $G(k)$ which when base changed to $\bar{k}$ become equal to the conjugacy class of $g$ in $G(\bar{k})$ are in bijection with the set $H^1(\overline k/k, Z(g)(\overline k))$.
\end{lemma}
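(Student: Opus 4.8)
The plan is to deduce both assertions from the general correspondences already established in this section, by observing that the two groups $GL_n$ and $SL_n$ enjoy the special property that their first Galois cohomology is trivial. Recall from Lemma~\ref{Fi-zc} and the two theorems immediately following it that, for a reductive $G$, the set $\mathcal{A}_g$ is in bijection with the fibre over the distinguished element of the map $\tilde{i}\colon H^1(\overline{k}/k, N_g(\overline{k})) \to H^1(\overline{k}/k, G(\overline{k}))$ induced by the inclusion $N_g \hookrightarrow G$, and that the conjugacy classes in $G(k)$ which become equal to that of $g$ over $\overline{k}$ are in bijection with the fibre over the distinguished element of the analogous map $H^1(\overline{k}/k, Z(g)(\overline{k})) \to H^1(\overline{k}/k, G(\overline{k}))$ induced by $Z(g) \hookrightarrow G$. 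Hence both statements follow at once once we know that the common target $H^1(\overline{k}/k, G(\overline{k}))$ is a single point, for then the fibre over the distinguished element is the whole source.

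First I would dispose of $G = GL_n$. Here $H^1(\overline{k}/k, GL_n(\overline{k})) = 1$ is exactly the cohomological form of Hilbert's Theorem 90, every $1$-cocycle with values in $GL_n(\overline{k})$ being a coboundary (see \cite[Chapter III]{Se}). With the target trivial, the two fibres above collapse to the full sets $H^1(\overline{k}/k, N_g(\overline{k}))$ and $H^1(\overline{k}/k, Z(g)(\overline{k}))$, which is precisely the claim.

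For $G = SL_n$ I would obtain the vanishing from the short exact sequence of $k$-groups $1 \to SL_n \to GL_n \xrightarrow{\det} \mathbb{G}_m \to 1$. The associated exact sequence of pointed sets contains the segment $GL_n(k) \xrightarrow{\det} k^{\times} \to H^1(\overline{k}/k, SL_n(\overline{k})) \to H^1(\overline{k}/k, GL_n(\overline{k}))$. The determinant is surjective on $k$-rational points and the last term is trivial by the previous paragraph, so exactness of pointed sets forces $H^1(\overline{k}/k, SL_n(\overline{k})) = 1$ as well. The same collapsing argument then yields both bijections for $SL_n$.

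The routine part is the pointed-set bookkeeping, together with checking that the maps $\tilde{i}$ appearing in the earlier theorems are literally the ones induced by the inclusions $N_g \hookrightarrow G$ and $Z(g) \hookrightarrow G$, so that their description as fibres over the distinguished element applies without change. The one genuinely essential input -- and the reason the statement is special to type $A$ rather than to a general reductive $G$ -- is the vanishing of $H^1(\overline{k}/k, G(\overline{k}))$; for an arbitrary reductive group this set need not be trivial, which is exactly why Lemma~\ref{Fi-zc} matches $\mathcal{A}_g$ with only a proper subset of the $N_g$-cohomology in general. I do not expect any serious obstacle, since the vanishing for $GL_n$ and $SL_n$ is entirely classical; the sole computation of substance is the short determinant sequence used in the $SL_n$ case.
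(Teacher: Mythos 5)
Your proposal is correct and follows exactly the paper's route: the paper proves this lemma by citing the theorems describing $\mathcal{A}_g$ and the conjugacy-class forms as the fibres over the distinguished element of $H^1(\overline{k}/k, G(\overline{k}))$, together with the vanishing $H^1(\overline{k}/k, G(\overline{k})) = 1$ for $G = GL_n$ or $SL_n$. You have merely supplied the standard justifications (Hilbert 90 and the determinant sequence) that the paper leaves implicit.
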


\begin{proof}
This follows from Theorem 3.3 as $H^1(\bar{k}/k, G(\bar{k}))=1$ when $G=GL_n$ or $SL_n$.
\end{proof}

We apply this lemma to certain regular semisimple and unipotent elements. 

\subsection{$z$-classes of semisimple elements}
Let us begin with semisimple elements. 
Fix a regular semisimple element $g\in GL_n(k)$ or in $SL_n(k)$. 
The centraliser $Z(g)$ over $\overline k$ is a maximal torus, say $T$.
The $z$-classes over $k$ which, after base change to $\overline k$, are equivalent to that of $g$ is given by $H^1(\overline k/k, N(T)(\bar{k}))$. 
This corresponds to the conjugacy classes of maximal tori over $k$, by \cite[Lemma 2.1]{G}.
Note that, although the statement in \cite{G} deals only with a split maximal torus, the argument works for any fixed maximal torus in $G$.

\begin{proposition}\label{exampleA}
Let $k$ be a perfect field not of type $(F)$, then there exists $n$ such that the group $GL_n(k)$ has infinitely many $z$-classes of semisimple elements.
\end{proposition}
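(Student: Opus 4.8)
The plan is to exploit the failure of type $(F)$ to manufacture infinitely many pairwise non-conjugate maximal tori in $GL_n$, each realised as the centraliser of a $k$-rational regular semisimple element. Since $k$ is not of type $(F)$ there is an integer $n$ such that $k$ admits infinitely many pairwise non-isomorphic extensions $L_1, L_2, \dots$ of degree $n$ inside $\overline{k}$; as $k$ is perfect each $L_i/k$ is separable. For a degree-$n$ separable extension $L/k$, viewing $L$ as an $n$-dimensional $k$-vector space and letting $L^\times$ act on $L$ by multiplication embeds the Weil restriction torus $T_L = \mathrm{Res}_{L/k}\mathbb{G}_m$ as a maximal $k$-torus of $GL_n = GL(L)$, with $T_L(k) = L^\times$. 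Choosing a primitive element $\alpha$ of $L/k$ (which exists since $k$ is perfect) and letting $g_L \in GL_n(k)$ be multiplication by $\alpha$, the characteristic polynomial of $g_L$ equals the minimal polynomial of $\alpha$, which is separable of degree $n$; hence $g_L$ has $n$ distinct eigenvalues over $\overline{k}$ and is regular semisimple. Its centraliser in $M_n(k) = \mathrm{End}_k(L)$ is $k[\alpha] = L$, so that $Z(g_L) = T_L$.

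First I would record the reduction supplied by the discussion preceding this proposition: for regular semisimple elements the $z$-class is governed by the $GL_n(k)$-conjugacy class of the centralising maximal torus, so that two regular semisimple elements are $z$-equivalent if and only if their centralising tori are $GL_n(k)$-conjugate, and distinct conjugacy classes of such tori therefore give distinct $z$-classes of semisimple elements. Thus it suffices to show that the tori $T_{L_i}$ are pairwise non-conjugate in $GL_n(k)$, equivalently that $T_L$ and $T_{L'}$ are $GL_n(k)$-conjugate only when $L \cong L'$ as $k$-algebras. Granting this, the elements $g_{L_1}, g_{L_2}, \dots$ lie in infinitely many distinct $z$-classes, which proves the proposition.

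The crux, and the step I expect to require the most care, is this non-conjugacy statement, which I would prove by a direct linear-algebra argument rather than through the cohomological classification. The point is that the $k$-linear span of $T_L(k) = L^\times$ inside $M_n(k)$ is exactly the commutative subalgebra $L \subseteq \mathrm{End}_k(L)$, since $L^\times$ contains the $k$-basis $1, \alpha, \dots, \alpha^{n-1}$. If $a \in GL_n(k)$ satisfies $a T_L a^{-1} = T_{L'}$, then the $k$-algebra automorphism $x \mapsto a x a^{-1}$ of $M_n(k)$ carries $\mathrm{span}_k T_L(k) = L$ isomorphically onto $\mathrm{span}_k T_{L'}(k) = L'$, yielding an isomorphism $L \cong L'$ of $k$-algebras. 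Equivalently, one may identify the $GL_n(k)$-conjugacy classes of maximal $k$-tori with the isomorphism classes of degree-$n$ \'etale $k$-algebras (matching $T_L$ with $L$), as follows from \cite[Lemma 2.1]{G} together with the description $N(T)/T \cong S_n$ for $GL_n$. Either route shows that the infinitely many non-isomorphic $L_i$ produce infinitely many non-conjugate tori, and hence infinitely many $z$-classes of semisimple elements in $GL_n(k)$.
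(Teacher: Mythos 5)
Your proposal is correct and follows essentially the same route as the paper: both take the degree $n$ witnessing the failure of type $(F)$, attach to infinitely many pairwise non-isomorphic degree-$n$ extensions $L/k$ the Weil-restriction tori $\mathrm{Res}_{L/k}\mathbb{G}_m \subset GL_n$ realised via the regular representation of $L^\times$, and conclude from their pairwise non-conjugacy that $GL_n(k)$ has infinitely many $z$-classes of semisimple elements. You additionally fill in two details the paper only asserts, namely the explicit regular semisimple element (multiplication by a primitive element) whose centraliser is the torus, and the $k$-span argument showing that non-isomorphic extensions yield non-conjugate tori.
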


\begin{proof}
When the field $k$ is not of type $(F)$, there exists some $n$ such that the field $k$ has infinitely many extensions of degree $n$. 
We claim that $H^1(\overline k/k, N(T)(\bar{k}))$ is infinite where $T$ is a maximal torus in $GL_n(\overline k)$. 
This can be explicitly seen as follows. 

Any extension $K/k$ of degree $n$ gives rise to a maximal torus $T$ of $GL_n(k)$ obtained by the process of Weil restriction of scalars, denoted by $R_{K/k}(\mathbb{G}_m)$. 
The group $T(k)$ is precisely the embedding of $K^{\times}$ in $GL_n(k)$ obtained by the left regular action of $K$ on itself, after chosing a basis of $K$ over $k$.
The maximal tori thus obtained are non-conjugate if the field extensions are not isomorphic. 
Thus, we obtain infinitely many $z$-classes of semisimple elements.
\end{proof}

The same is true for $SL_n(k)$ as well where the maximal tori can be thought of as $K^1$, the norm $1$ elements of $K$. 
In particular, if we take $k=\mathbb Q$, then the group $GL_n(\mathbb Q)$ and $SL_n(\mathbb Q)$ have infinitely many $z$-classes of semisimple elements as there are infinitely many non-conjugate maximal tori, representing $z$-classes of regular semisimple elements within them.

\subsection{$z$-classes of unipotent elements}
Let us consider the unipotent elements of the form 
$$u_{\beta}= \begin{pmatrix} 1 & \beta & 0 &0 &\cdots& 0 \\ &1 & 1 &0 & \cdots & \\ &&\ddots &\ddots &\ddots& \\ &&&1&1&0\\ &&& &1 &1 \\ & &&&&1 \end{pmatrix}$$ 
in $SL_n(k)$ where $\beta\in k^*$. We note that all $u_{\beta}$ are conjugate over $\overline k$ to $u_1$ which is a regular unipotent element.

\begin{proposition}\label{exampleB}
The set of $k$-forms of the conjugacy class of unipotents $u_{\beta}$ in $SL_n(k)$ is in one-one correspondence with $k^*/(k^*)^n$. 

For $n=2$, they form a single $z$-class, and when $n=3$, the $z$-classes are in one-one correspondence to $k^*/(k^*)^3$.
\end{proposition}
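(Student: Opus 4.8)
The plan is to reduce both statements to computing two Galois cohomology sets attached to the regular unipotent $u_1$, via the $SL_n$-version of the lemma that opens this section. The first task is to identify the centralizer. Writing $N$ for the regular nilpotent with $u_1 = I + N$, the centralizer of $u_1$ in $GL_n$ is the unit group of the commutative algebra $k[N] = \{a_0 I + a_1 N + \cdots + a_{n-1}N^{n-1}\}$, i.e. those elements with $a_0 \neq 0$; imposing $\det = a_0^n = 1$ cuts out the centralizer in $SL_n$. Thus $Z(u_1) \cong \mu_n \times \mathbb{G}_a^{n-1}$, where the factor $\mu_n$ is precisely the centre $Z(SL_n)$ of scalar matrices and $\mathbb{G}_a^{n-1}$ is the unipotent part $V = \{I + a_1 N + \cdots + a_{n-1}N^{n-1}\}$. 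The very good characteristic hypothesis ($p \nmid n$ for type $A_{n-1}$) ensures $\mu_n$, and hence $Z(u_1)$, is smooth, as required for the cohomological machinery.

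For the first statement I would apply the lemma above: since $H^1(\overline{k}/k, SL_n(\overline{k})) = 1$, the $k$-forms of the $\overline{k}$-conjugacy class of $u_1$ are in bijection with $H^1(\overline{k}/k, Z(u_1)(\overline{k}))$. The unipotent factor contributes nothing because $H^1(\overline{k}/k, \mathbb{G}_a(\overline{k})) = 0$ (additive Hilbert 90 / normal basis theorem), reducing the set to $H^1(\overline{k}/k, \mu_n(\overline{k}))$. Kummer theory applied to $1 \to \mu_n \to \mathbb{G}_m \to \mathbb{G}_m \to 1$, together with Hilbert 90, then identifies this with $k^*/(k^*)^n$. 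The elements $u_\beta$ give explicit representatives, with $u_\beta$ and $u_{\beta'}$ being $k$-conjugate exactly when $\beta/\beta' \in (k^*)^n$.

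For the $z$-classes I would pass to $N_g = N(Z(u_1))$ and use that $\mathcal{A}_g$ is in bijection with $H^1(\overline{k}/k, N_g(\overline{k}))$, while the natural map from forms to $z$-classes is induced on cohomology by the inclusion $Z(u_1) \hookrightarrow N_g$. Because $Z(SL_n)$ is central, normalizing $Z(u_1)$ amounts to normalizing $V$. For $n = 2$ one checks $N_g$ is the full Borel $B$; as $B$ is an extension of $\mathbb{G}_m$ by $\mathbb{G}_a$, both with trivial $H^1$, we get $H^1(\overline{k}/k, B(\overline{k})) = 0$, so every form maps to the distinguished class and all the $u_\beta$ form a single $z$-class. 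For $n = 3$ I would first show $N_g \subseteq B$: any $g$ normalizing $V$ must send the rank-one element $N^{2}$ to a scalar multiple of itself, hence preserve the flag $\langle e_1\rangle \subset \langle e_1, e_2\rangle$ and so be upper triangular. A direct computation then gives $N_g = (\mathbb{G}_m \times \mu_3) \ltimes U^+$, where the $\mu_3$ records the relation $t_1 t_3 = t_2^2$ forced on the diagonal torus and $U^+$ is the full upper unipotent subgroup (which one verifies normalizes $V$). Since $U^+$ is cohomologically trivial, $H^1(\overline{k}/k, N_g) \cong H^1(\overline{k}/k, \mathbb{G}_m \times \mu_3) \cong k^*/(k^*)^3$.

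To finish the $n = 3$ case I would track the scalar copy $Z(SL_3) = \mu_3 \subset Z(u_1)$ into $N_g$: it lands in the torus as $a_0 \mapsto (t_1, t_2) = (a_0, a_0)$, so on cohomology the induced map $H^1(Z(u_1)) = H^1(\mu_3) \to H^1(N_g) = H^1(\mu_3)$ is the identity on the $\mu_3$-part, hence a bijection $k^*/(k^*)^3 \to k^*/(k^*)^3$; this shows each form is its own $z$-class and that the $z$-classes are counted by $k^*/(k^*)^3$. The main obstacle I anticipate is the explicit determination of $N_g$ together with the bookkeeping of how the central $\mu_n$ embeds in it, since the entire contrast between $n = 2$ and $n = 3$ comes down to whether this copy of $\mu_n$ survives in $H^1(N_g)$ or is killed. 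One must also treat the nonabelian cohomology sequences with care: the isomorphism $H^1(N_g) \cong H^1(N_g/U^+)$ rests on the fibers of the connecting maps being trivial, which holds because split unipotent groups over a perfect field are cohomologically trivial, and this should be invoked explicitly rather than taken for granted.
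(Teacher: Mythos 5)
Your proposal is correct and follows essentially the same route as the paper: identify $Z(u_\beta)$ as (split unipotent)$\,\times\,\mu_n$, use the $SL_n$ version of the forms-of-conjugacy-classes lemma (via $H^1(\overline{k}/k, SL_n(\overline{k}))=1$) to identify the $k$-forms with $H^1(\overline{k}/k, Z(u_\beta)(\overline{k}))\cong k^*/(k^*)^n$, and then compute the normalizer --- the Borel for $n=2$, and $U'\rtimes(\mathbb{G}_m\times\mu_3)$ for $n=3$ --- to get the $z$-class count from $H^1$ of the normalizer. Your additional steps (the flag argument showing $N_g\subseteq B$, and tracking the inclusion-induced map $H^1(Z(u_1))\to H^1(N_g)$ to confirm each form is its own $z$-class for $n=3$) are refinements of details the paper leaves implicit, not a different method.
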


\begin{proof}
We begin with $n=2$. 
Let us fix a $\beta\neq 0$. 
The centraliser group is given by
$$Z(u_{\beta})=\left\{\begin{pmatrix}a& b\\ 0&a \end{pmatrix} : a^2=1\right\} \cong U\times \mu_2$$ 
where $U$ is the unipotent upper triangular matrix group and $\mu_2$ is the center of $SL_2(k)$. 
Further, $N(Z(u_{\beta}))=B$, is the Borel subgroup of upper-triangular matrices in $SL_2(k)$. 
By Lemma $5.1$, the set of $k$-forms of the conjugacy class of $u_{\beta}$ in $SL_2(k)$ is given by $H^1(\overline k/k, Z(u_{\beta})(\bar{k}))$. 
To compute this, we use the sequence 
$$1\rightarrow U \rightarrow Z(u_{\beta}) \rightarrow \mu_2 \rightarrow 1$$
and get $H^1(\overline k/k, Z(u_{\beta})(\bar{k}))=H^1(\overline k/k,\mu_2) \cong k^*/(k^*)^2$. 
However, the $z$-classes of $u_{\beta}$ in $SL_2(k)$ are given by $H^1(\overline k/k, B)=1$ as $B\cong \mathbb G_m\ltimes \mathbb G_a$. 
In~\cite{BSi} this is explicitly computed in Example 5.2.

The calculation for the general case when $n\geq 3$ is similar. 
Since $u_{\beta}$ is cyclic, the centraliser in $GL_n(k)$ is given by polynomials in $u_{\beta}$. 
Thus the centraliser of $u_{\beta}$ in $SL_n(k)$ is, 
$$Z(u_{\beta})= \left\{ \begin{pmatrix} 
\alpha_{1} & \alpha_{2}\beta & \alpha_{3}\beta &\cdots & \alpha_{n-1}\beta &\alpha_{n}\beta \\ 
&\alpha_{1} &\alpha_{2} &\alpha_{3} & \cdots & \alpha_{n-1}\\ 
&&\ddots &\ddots & \ddots& \\
&&& \ddots &\ddots& \alpha_{3}\\ 
&&& & \alpha_{1}&\alpha_{2} \\
&&&&&\alpha_{1}\end{pmatrix} : \alpha_{1}^n =1, \alpha_{i}\in k \right\}. 
$$
Hence, $Z(u_{\beta})=U\times \mu_n$ where $U$ is the unipotent radical of $Z(u_{\beta})$ and $\mu_n$ is the center of $SL_n(k)$. 
Once again, the set of $k$-forms of the conjugacy class of $u_{\beta}$ in $SL_n(k)$ is given by $H^1(\overline k/k, Z(u_{\beta})(\overline k))$. 
As before, using the sequence 
$$1\rightarrow U \rightarrow Z(u_{\beta}) \rightarrow \mu_n \rightarrow 1$$ 
we compute $H^1(\overline k/k, Z(u_{\beta})(\overline k)) = H^1(\overline k/k,\mu_n(\overline k)) \cong k^*/(k^*)^n$. 
However, the $z$-class of $u_{\beta}$ in $SL_n(k)$ is given by $H^1(\overline k/k, N(Z(u_{\beta}))(\bar{k}))$. 
For $n=3$ we get 
$$N(Z(u_{\beta})) = \left\{ \begin{pmatrix} a_{11} & a_{12} & a_{13} \\ & a_{22} & a_{23}\\ && a_{11}^{-1}a_{22}^{-1} \end{pmatrix} : a_{22}^3=1\right\}\cong U'\rtimes H'$$
where $U'$ is unipotent radical and $H'(\overline k)\cong \mathbb G_m\times \mu_3(\overline k)$. 
Thus, 
$$H^1(\overline k/k, N(Z(u_{\beta}))(\overline k))= H^1(\overline k/k, \mu_3(\overline k))\cong k^*/(k^*)^3 .$$ 
\end{proof}

Once again if we take $k=\mathbb Q$ then we get infinitely many conjugacy classes as well as $z$-classes corresponding to unipotent elements in $SL_3(\mathbb Q)$. 
More generally, when $n\geq 3$, $N(Z(u_{\beta}))=\tilde U \rtimes D$ is a subgroup of upper-triangular matrices, where $\tilde U$ is unipotent radical and $D$ consist of the diagonals normalising $Z(u_{\beta})$. 
A simple computation tells us that 
$$D=\{diag( d^{n-1} a, \ldots ,d^2 a, d a, a) \mid d^{\frac{n(n-1)}{2}}a^n=1\}\cong \mathbb G_m\times \mu_{\frac{n(n-1)}{2}} .$$ 
Thus, $SL_n(\mathbb Q)$ has infinitely many $z$-classes of unipotent elements for $n\geq 3$.

\subsection{$z$-classes in non-reductive groups}
Here, we show by means of an example that if $k$ is an algebraically closed field and $G$, defined over $k$, is not a reductive group then $G(k)$ can have infinitely many $z$-classes. 

Indeed, let $G$ be the  group of unipotent upper triangular $3\times 3$ matrices. 
For $t \in k^{\times}$, the centralizer $Z(h)$ in $G$ of the element $h=h(t)= \begin{pmatrix} 1& t & 0\\ 0& 1& 1\\ 0& 0& 1\end{pmatrix}$ consists of all matrices
$\begin{pmatrix} 1& ta& b \\ 0 &1&  a\\ 0 &0 & 1 \end{pmatrix}$ for $a,b \in k$. 
The centralizers $Z(h) = Z(h(t))$ are pairwise non-conjugate in $G(k)$, as $t$ varies through $k^{\times}$. 
So $G(k)$ has infinitely many distinct z-classes - e.g. those represented by the $h(t)$.

Further, as mentioned in the introduction, we also have the result of S. Bhunia (\cite{B}) that the group $G(k)$, where $G$ is the group of upper triangular $n \times n$ matrices over an infinite field $k$, has infinitely many $z$-classes whenever $n \geq 6$. 

\subsection{A curious example}
We close the paper with the following curious example which says that a $k$-form of a conjugacy class of a centralizer subgroup need not itself be a centralizer subgroup. 

Let $k$ be a perfect field and $G$ be a reductive group defined over $k$. 
For a $k$-subgroup $H$ of $G$, the set of $k$-forms of conjugacy classes of $H$ in $G$ is in bijection with a certain subset of $H^1(\bar{k}/k, N(H)(\bar{k})$, in accordance with the Theorem $3.2$.
If the field $k$ is of type $(F)$ then, by the theorem of Borel and Serre, one gets that the conjugacy class of $H$ in $G$ has only finitely many forms over $k$. 
Thus, in particular, the conjugacy classes of subgroups which arise as centralisers of some element in $G$ have only finitely many $k$-forms.

However, it is interesting to note that if $H=Z(g)$ for some $g \in G(k)$ then every $k$-form of the conjugacy class of $H$ may not be a conjugacy class of $Z(h)$ for any $h \in G(k)$.

Take $G = SL_n$ defined over the field $k$ of order $2$ and let $g \in G(k)$ be a regular semisimple element, for instance, one can take $g$ to be a generator of the torus $K^{\times}$ coming from a degree $n$ extension $K$ of $k$. 
Then $Z(g)$ is a maximal torus in $G$ and its $k$-forms are precisely the maximal tori in $G$.
The split maximal torus, say $T$, in $G$ is also one of them, however, there is no regular semisimple element over $k$ for $T$ in $G$ and hence $T$ is not equal to any element centraliser in $G(k)$.
In fact, $T(k)$ has only the trivial element and hence it is not equal to $Z(h)$ for any $h \in G(k)$.

\section*{Acknowledgements} 
We warmly acknowledge fruitful discussions with Dipendra Prasad, Maneesh Thakur and Anuradha Garge.
We also thank the referees for a careful reading of the paper and for many valuable suggestions improving the paper to a great extent.
The second named author was supported by NBHM through a research project and SERB MATRICS grant during this work.

\end{document}